\theoremstyle{plain}
\newtheorem{theorem}{Theorem}[section]
\newtheorem*{theorem*}{Theorem}
\newtheorem{lemma}[theorem]{Lemma}
\theoremstyle{definition}
\newtheorem{definition}[theorem]{Definition}
\newtheorem*{definition*}{Definition}
\numberwithin{equation}{section}
\newcommand{\C}{\mathbb{C}}
\newcommand{\D}{\mathbb{D}}
\newcommand{\N}{\mathbb{N}}
\newcommand{\Lip}{\mathcal{L}}
\newcommand{\LipW}{\Lip_{\normalfont{\textbf{w}}}}
\newcommand{\Linf}{L^\infty}
\newcommand{\LW}{L^\infty_{\normalfont{\textbf{w}}}}
\newcommand{\Lipk}{\Lip^{(k)}}
\newcommand{\Lipm}{\Lip^{(m)}}
\newcommand{\Lipn}{\Lip^{(n)}}
\newcommand{\bigchi}{\mbox{\Large$\chi$}}
\title[Multiplication Operators Between I.L.L. Spaces]{Multiplication Operators Between Iterated Logarithmic Lipschitz Spaces\\ of a Tree}
\author{Robert F.~Allen\textsuperscript{1}, Flavia Colonna\textsuperscript{2}, and Andrew Prudhom\textsuperscript{3}}
\address{\textsuperscript{1}Department of Mathematics and Statistics, University of Wisconsin-La Crosse}
\address{\textsuperscript{2}Department of Mathematical Science, George Mason UniversityA}
\address{\textsuperscript{3}Department of Mathematics, University of North Carolina at Chapel Hill}
\email{rallen@@uwlax.edu, fcolonna@gmu.edu, prudhom@live.unc.edu}
\subjclass[2010]{primary: 47B38, 05C05}
\keywords{Multiplication operators, trees, iterated logarithmic Lipschitz spaces}
\date{}
\dedicatory{In memory of Maurice Heins}
\begin{document}
\maketitle

\begin{abstract}
In this article, we characterize the bounded and the compact multiplication operators between distinct iterated logarithmic Lipschitz spaces, and between the Lipschitz space and an iterated logarithmic Lipschitz space of an infinite tree.  In addition, we provide operator norm estimates and show that there are no isometries among such operators.  
\end{abstract}

\section{Introduction}
Let $X$ and $Y$ be Banach spaces of complex-valued functions defined on a set $\Omega$.  For a complex-valued function $\psi$ on $\Omega$ we define the \textit{multiplication operator with symbol $\psi$} to be the linear operator $M_\psi f = \psi f$ for all $f \in X$.  A primary objective in the study of operators with symbol is to relate the function-theoretic properties of the symbol to the properties of the operator.  

In recent years, the study of such operators on spaces of functions defined on discrete structures has been conducted.  In particular, the first two authors, among others, have studied operators on spaces of functions defined on infinite rooted trees.  These spaces, under certain restrictions, are often considered as discrete analogs to  classical spaces of analytic functions defined on the open unit disk $\D$.  A connection between functions on a homogeneous tree and analytic functions on $\D$ was made explicit through a particular embedding of the tree into $\D$ in \cite{CohenColonna:94}.

Motivated by the wide interest in the study of linear operators on the Bloch space of analytic functions on $\D$, whose elements are characterized by a Lipschitz condition under an appropriate choice of metrics (see \cite{Colonna:89}), the Lipschitz space $\Lip$ on a tree $T$ was introduced in \cite{ColonnaEasley:10}.  The study of multiplication and composition operators on $\Lip$ conducted in \cite{ColonnaEasley:10} and \cite{AllenColonnaEasley:14} led, in \cite{AllenColonnaEasley:13}, to the study of the weighted Lipschitz space $\LipW$ and of the multiplication operators acting on $\LipW$. For related work on such operators see \cite{AllenColonnaEasley:11}. Due to the analogy between the Bloch space and the Lipschitz space in their respective environments, the weighted Lipschitz space can be viewed as the discrete analog of the weighted Bloch space studied in \cite{Yoneda:02}.

The process that gave rise to $\LipW$ was used to define the iterated logarithmic Lipschitz spaces, $\Lipk$ for $k \in \N$, of which $\LipW = \Lip^{(1)}$.  The multiplication operators on $\Lipk$ were studied in \cite{AllenColonnaEasley:12}.  These spaces can be viewed as discrete versions of the logarithmic Bloch spaces, on which the weighted composition operators were studied in \cite{HosokawaDieu:09}. 

Research on the multiplication operators acting between the space $L^\infty$ of bounded functions on a tree equipped with the supremum norm $\|\cdot\|_\infty$, and $\Lip$ was carried out in \cite{ColonnaEasley:12}.  In \cite{AllenCraig:2015} and \cite{AllenPons:2016},  the first author with Craig and Pons, respectively, studied multiplication and composition operators on the weighted Banach space $L_\mu^\infty$ of functions on a tree for a given positive weight $\mu$.  In \cite{MuthukumarPonnusamyI} and \cite{MuthukumarPonnusamyII}, the authors developed a discrete version of the Hardy spaces on homogeneous trees and studied the multiplication and composition operators on such spaces.

In this paper, we expand the research carried out in \cite{AllenColonnaEasley:12} by focusing on the multiplication operators acting either between distinct iterated logarithmic Lipschitz spaces, or between the Lipschitz space and an iterated logarithmic Lipschitz space. The techniques used also provide improvements on known results in some cases.

\subsection{Preliminary Definitions and Notation}
By a \textit{tree} $T$ we mean a locally finite, connected, and simply-connected graph, which, as a set, we identify with the collection of its vertices.  By a \textit{function on a tree} we mean a complex-valued function on the set of its vertices.

Two vertices $v$ and $w$ are called \textit{neighbors} if there is an edge $[v,w]$
connecting them, and we use the notation $v\sim w$. A vertex is called \textit{terminal} if it has a unique neighbor. A \textit{path} is a sequence of vertices $[v_0,v_1,\dots]$ such that $v_k\sim v_{k+1}$ and  $v_{k-1}\ne v_{k+1}$, for all $k$.  Define the \textit{length} of a finite path $[v=v_0,v_1,\dots,w=v_n]$ to be the number  of edges $n$ connecting $v$ to $w$. The \textit{distance} between vertices $v$ and $w$ is the length $d(v,w)$ of the unique path connecting $v$ to $w$.

Given a tree $T$ rooted at $o$, the \textit{length} of a vertex $v$ is defined as $|v|=d(o,v)$. For a vertex $v\in T$, a vertex $w$ is called a \textit{descendant} of $v$  if $v$ lies in the path from $o$ to $w$. The vertex $v$ is then called an \textit{ancestor} of $w$.  For $v \in T$ with $v \neq o$, we denote by $v^-$ the unique neighbor which is an ancestor of $v$.  The vertex $v$ is called a \textit{child} of $v^-$. 
For $v\in T$, the set $S_v$ consisting of $v$ and all its descendants is called the \textit{sector} determined by $v$. The set $T\backslash\{o\}$ will be denoted by $T^*$. 

In this paper, we shall assume the tree $T$ to be without terminal vertices (and hence infinite), and rooted at a vertex $o$.  In addition, $k,m,$ and $n$ are natural numbers, and $\psi$ a fixed function on $T$.  We define the \textit{discrete derivative} of a function $f$ on $T$ as $$f'(v) = \begin{cases}f(v)-f(v^-) & \text{if } v \neq o,\\0 & \text{if } v = o.\end{cases}$$

\section{Lipschitz-Type Spaces}\label{Section:Family}
In \cite{ColonnaEasley:10}, the \textit{Lipschitz space} was defined as the set of functions on $T$ which are Lipschitz as maps between the metric space $(T,d)$ and the Euclidean space $\C$. It was shown that a function $f\in \Lip$ if and only if $f' \in \Linf$, and $\|f'\|_\infty$ is precisely the Lipschitz number of $f$.  It was further shown that $\Lip$ is a Banach space under the norm 
$$\|f\|_\Lip = |f(o)| + \|f'\|_\infty.$$  The following result provides a bound on point-evaluation in the Lipschitz space.

\begin{lemma}\label{lip_bound}\cite[Lemma 3.4(a)]{ColonnaEasley:10} 
	If $f \in \Lip$ and $v \in T$, then $$|f(v)| \leq |f(o)| + |v|\|f'\|_\infty.$$  In particular, if $\|f\|_\Lip \leq 1$, then $|f(v)| \leq |v|$ for each $v \in T^*$.
\end{lemma}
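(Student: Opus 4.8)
The plan is to exploit the definition of the discrete derivative and telescope along the geodesic from the root to $v$. First I would fix $v \in T$ with $|v| = n$ and write the unique path connecting $o$ to $v$ as $[o = v_0, v_1, \dots, v_n = v]$, where consecutive vertices satisfy $v_{j-1} = v_j^-$ for each $j$. Since by definition $f'(v_j) = f(v_j) - f(v_{j-1})$ whenever $v_j \neq o$, the sum $\sum_{j=1}^{n} f'(v_j)$ telescopes to $f(v) - f(o)$, giving $f(v) = f(o) + \sum_{j=1}^{n} f'(v_j)$.

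The main estimate then follows from the triangle inequality. Applying it to the telescoped identity yields $|f(v)| \leq |f(o)| + \sum_{j=1}^{n} |f'(v_j)|$, and since each term satisfies $|f'(v_j)| \leq \|f'\|_\infty$ while the geodesic has exactly $n = |v|$ edges, this is bounded by $|f(o)| + |v|\,\|f'\|_\infty$, as claimed. The case $v = o$ is immediate, since there the bound reduces to $|f(o)| \leq |f(o)|$, consistent with the convention $f'(o) = 0$.

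For the ``in particular'' statement, I would assume $\|f\|_\Lip = |f(o)| + \|f'\|_\infty \leq 1$ and take $v \in T^*$, so that $|v| \geq 1$. Using $|f(o)| \leq |v|\,|f(o)|$ and factoring, the first part gives $|f(v)| \leq |f(o)| + |v|\,\|f'\|_\infty \leq |v|\bigl(|f(o)| + \|f'\|_\infty\bigr) = |v|\,\|f\|_\Lip \leq |v|$.

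I do not anticipate a genuine obstacle here, as the argument is a single telescoping computation. The only points requiring care are matching the number of summands to the length $|v|$ of the geodesic and invoking the convention $f'(o) = 0$, so that the telescoping identity is valid including the endpoint at the root.
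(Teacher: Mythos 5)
Your proof is correct, and it is the standard argument for this fact: the paper itself states this lemma without proof (citing \cite[Lemma 3.4(a)]{ColonnaEasley:10}), and the telescoping of $f'$ along the geodesic from $o$ to $v$, followed by the triangle inequality, is exactly how that cited result is obtained, since $\|f'\|_\infty$ is the Lipschitz number of $f$. Your handling of the ``in particular'' part via $|f(o)| \leq |v|\,|f(o)|$ for $|v| \geq 1$ is also the right way to absorb the constant term, so nothing is missing.
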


The bounded multiplication operators on $\Lip$ were characterized as follows.
\begin{theorem}\cite[Theorem 3.6]{ColonnaEasley:10} The operator $M_\psi$ is bounded on $\Lip$ if and only if $\psi \in L^\infty$ and $\psi' \in \LW$, where {\bf{w}} is the weight defined by the length, that is, {\bf{w}}$(v)=|v|$.

\end{theorem}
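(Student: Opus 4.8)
The plan is to base the whole argument on a discrete product rule for the action of $M_\psi$ on the derivative. Adding and subtracting $\psi(v)f(v^-)$ in $\psi(v)f(v) - \psi(v^-)f(v^-)$ gives, for every $v \in T^*$,
\[
(\psi f)'(v) = \psi(v)\,f'(v) + \psi'(v)\,f(v^-).
\]
Since $\|M_\psi f\|_\Lip = |\psi(o)|\,|f(o)| + \|(\psi f)'\|_\infty$, this identity exhibits exactly the two quantities $\|\psi\|_\infty$ and $\sup_{v}|v|\,|\psi'(v)| = \|\psi'\|_{\LW}$ that appear in the statement, and the proof amounts to matching each summand of the product rule with one of these quantities.

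For sufficiency, assume $\psi \in L^\infty$ and $C := \|\psi'\|_{\LW} < \infty$, and take $f$ with $\|f\|_\Lip \le 1$. The point-evaluation term is at most $\|\psi\|_\infty$. In the product rule I would bound the first summand by $\|\psi\|_\infty\|f'\|_\infty \le \|\psi\|_\infty$, and for the second summand invoke Lemma \ref{lip_bound} to get $|f(v^-)| \le |v^-| \le |v|$, so that $|\psi'(v)\,f(v^-)| \le |v|\,|\psi'(v)| \le C$. Taking the supremum over $v$ yields $\|M_\psi f\|_\Lip \le 2\|\psi\|_\infty + C$, so $M_\psi$ is bounded.

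For necessity, suppose $M_\psi$ is bounded with $\|M_\psi\| = C_0$, and extract the two conditions from well-chosen unit-norm test functions. To obtain $\psi \in L^\infty$, apply $M_\psi$ to the characteristic function $\chi_{S_w}$ of a sector: for $w \in T^*$ one has $\|\chi_{S_w}\|_\Lip = 1$ and $(\psi\chi_{S_w})'(w) = \psi(w)$, so boundedness forces $|\psi(w)| \le C_0$, while the constant function $1$ controls $|\psi(o)|$. To obtain $\psi' \in \LW$, fix $v$ with $|v| \ge 2$ and feed $M_\psi$ the function $f$ that increases by $1$ along the geodesic from $o$ to $v^-$, reaching $f(v^-) = |v^-|$, and is held constant on each sector branching off that geodesic — in particular on $S_v$, forcing $f'(v) = 0$. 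Such an $f$ satisfies $\|f\|_\Lip = 1$, and the product rule collapses to $(\psi f)'(v) = |v^-|\,\psi'(v)$, whence $(|v|-1)|\psi'(v)| \le C_0$; comparing $|v|-1$ with $|v|$ gives $|v|\,|\psi'(v)| \le 2C_0$, and the remaining case $|v| = 1$ follows from $|\psi'(v)| \le |\psi(v)| + |\psi(o)| \le 2C_0$ using the bound on $\psi$ just established.

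The two estimates built on the product rule are routine; the crux is the necessity direction, and specifically the construction of the test function isolating $|v|\,|\psi'(v)|$. The competing requirements are to make $f(v^-)$ as large as the weight $|v|$ permits while simultaneously forcing $f'(v) = 0$ so that the $\psi(v)f'(v)$ term vanishes, all with $\|f\|_\Lip$ bounded uniformly in $v$; the confluent-type function above is precisely what reconciles these demands, and verifying its norm and that it kills $f'(v)$ is where the real work lies.
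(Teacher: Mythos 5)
Your proof is correct, but it does not follow the paper's route. Note first that this paper does not actually prove the statement---it quotes it from \cite{ColonnaEasley:10}---so the natural comparison is with the paper's proof of the analogous result, Theorem \ref{Theorem:boundedness}, on boundedness of $M_\psi:\Lipm\to\Lipn$. There the product rule is split as $(\psi f)'(v)=\psi'(v)f(v)+\psi(v^-)f'(v)$, and necessity is extracted from two global test families: normalized point masses $\bigchi_v/\Lambda_m(|v|+1)$, which isolate the quantity involving $|\psi(v^-)|$, and the single unbounded function $g=\ell_m$ (which is $g(v)=|v|$ when $m=0$), which yields the bound on $|\psi'(v)|\ell_m(|v|)$ only after subtracting a cross-term that requires the previously established control of $\psi(v^-)$. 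You instead split as $(\psi f)'(v)=\psi(v)f'(v)+\psi'(v)f(v^-)$ and localize the test functions: sector indicators $\bigchi_{S_w}$ give $|\psi(w)|\le\|M_\psi\|$ in one step, and your capped-geodesic function is engineered so that $f'(v)=0$, which kills the cross-term entirely and yields the self-contained inequality $(|v|-1)\,|\psi'(v)|\le\|M_\psi\|$. What each approach buys: the paper's test functions are essentially verification-free, while yours decouple the two necessary conditions so that each test function isolates exactly one of them, at the price of a more elaborate construction and a separate case $|v|=1$ (which you do handle). One small slip worth repairing: in the sufficiency direction you invoke Lemma \ref{lip_bound} to write $|f(v^-)|\le|v^-|\le|v|$, which is illegitimate when $v^-=o$, since the lemma's pointwise bound only holds on $T^*$ and $|f(o)|\le 0$ is false in general; the fix is immediate, because $\|f\|_\Lip\le 1$ gives $|f(o)|\le 1=|v|$ in that case, so the conclusion $|\psi'(v)f(v^-)|\le|v|\,|\psi'(v)|$ stands and your bound $\|M_\psi f\|_\Lip\le 2\|\psi\|_\infty+\|\psi'\|_{\LW}$ is unaffected.
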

This result motivated the study in \cite{AllenColonnaEasley:13} of the \textit{weighted Lipschitz space} $\LipW$ defined as
the set of functions $f$ on $T$ such that $f' \in \LW$.  Thus, the bounded functions in $\LipW$ are precisely those that induce bounded multiplication operators on $\Lip$.  
The bounded multiplication operators on $\LipW$ were characterized in the following result.
\begin{theorem}\cite[Theorem 4.1]{AllenColonnaEasley:13} The operator $M_\psi$ is bounded on $\LipW$ if and only if $\psi \in L^\infty$ and 
$\psi' \in L^\infty_\mu$, where $\mu(v)=|v|\log|v|$ for $v\in T^*$.
\end{theorem}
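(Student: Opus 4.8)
The plan is to base everything on the discrete product rule
\[
(\psi f)'(v) = \psi(v)\,f'(v) + \psi'(v)\,f(v^-), \qquad v \in T^*,
\]
which follows immediately from the definition of $f'$, together with a point-evaluation estimate for $\LipW$ in the spirit of Lemma~\ref{lip_bound}. Writing the geodesic from $o$ to $v$ as $o=v_0,\dots,v_{|v|}=v$ and telescoping, the facts $|v_j|=j$ and $\sup_u|u|\,|f'(u)|\le\|f\|_{\LipW}$ give $|f'(v_j)|\le\|f\|_{\LipW}/j$, whence
\[
|f(v)| \le |f(o)| + \sum_{j=1}^{|v|}|f'(v_j)| \le |f(o)| + \Bigl(1+\log|v|\Bigr)\|f\|_{\LipW}.
\]
Here I use $\|f\|_{\LipW}=|f(o)|+\sup_{v\in T^*}|v|\,|f'(v)|$ and write $\|\psi'\|_{\infty,\mu}=\sup_{v\in T^*}\mu(v)|\psi'(v)|$. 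The logarithmic growth above is precisely the source of the extra $\log|v|$ in the weight $\mu(v)=|v|\log|v|$.

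For sufficiency, I assume $\psi\in\Linf$ and $\psi'\in L^\infty_\mu$ and bound $\|\psi f\|_{\LipW}$ for $\|f\|_{\LipW}\le1$. The term $|\psi(o)f(o)|$ is at most $\|\psi\|_\infty$. Using the product rule I split $|v|\,|(\psi f)'(v)|$ into $|v|\,|\psi(v)|\,|f'(v)|$ and $|v|\,|\psi'(v)|\,|f(v^-)|$. The first is at most $\|\psi\|_\infty\,|v|\,|f'(v)|\le\|\psi\|_\infty$. For the second, when $|v|\ge2$ the point-evaluation bound yields $|f(v^-)|\le(2+\log|v|)\|f\|_{\LipW}$, and since $2+\log|v|\le C\log|v|$ for $|v|\ge2$ the product is at most $C\,|v|\log|v|\,|\psi'(v)|=C\mu(v)|\psi'(v)|\le C\|\psi'\|_{\infty,\mu}$; when $|v|=1$ one has $v^-=o$, so $|f(v^-)|=|f(o)|\le1$ while $|\psi'(v)|=|\psi(v)-\psi(o)|\le2\|\psi\|_\infty$, and this degenerate case is absorbed by the $\Linf$ condition. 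Taking the supremum bounds $\|M_\psi\|$ by a constant multiple of $\|\psi\|_\infty+\|\psi'\|_{\infty,\mu}$.

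For necessity I produce two families of test functions. To extract $\psi\in\Linf$, fix $w\in T^*$ and test against a normalized multiple of $\bigchi_{S_w}$: its discrete derivative is supported at $w$ with value $1$, so $\|\bigchi_{S_w}\|_{\LipW}=|w|$, and reading off $(\psi\,\bigchi_{S_w})'(w)=\psi(w)$ gives $|\psi(w)|\le\|M_\psi\|$, with $\psi(o)$ handled by the constant function $1$. To extract $\psi'\in L^\infty_\mu$, fix $w$ with $|w|\ge2$ and set $f_w(v)=\sum_{j=1}^{c(v)}1/j$, where $c(v)$ is the length of the last common vertex of the geodesics from $o$ to $v$ and from $o$ to $w$. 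Then $f_w'$ is supported on the geodesic $o\to w$ with $f_w'(v_j)=1/j$, so $\|f_w\|_{\LipW}=1$, while $f_w(w^-)=\sum_{j=1}^{|w|-1}1/j\ge\log|w|$. Applying the product rule at $v=w$, the reverse triangle inequality, and $|w|\,|\psi(w)|\,|f_w'(w)|\le\|\psi\|_\infty$, one subtracts off the controllable term and is left with $\mu(w)|\psi'(w)|\le|w|\,f_w(w^-)\,|\psi'(w)|\le\|M_\psi\|+\|\psi\|_\infty$, uniformly in $w$ with $|w|\ge2$ (and trivially when $|w|=1$, where $\mu$ vanishes).

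The main obstacle is the construction and calibration of the second test function: one must exhibit a member of the unit ball of $\LipW$ whose value at $w^-$ saturates the point-evaluation bound, namely of order $\log|w|$, while keeping the competing term $\psi(v)f'(v)$ bounded. The logarithmic profile $\sum_{j\le c(v)}1/j$ is forced by the weight $\mathbf{w}(v)=|v|$, and the matching of $|v|(1+\log|v|)$ against $\mu(v)=|v|\log|v|$, together with the degeneracy of $\mu$ at $|v|=1$ (where one must fall back on $\psi\in\Linf$), is the delicate bookkeeping that makes the weight $\mu$ come out exactly right.
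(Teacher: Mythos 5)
Your proof is correct, and the sufficiency half is essentially the standard argument this paper uses everywhere (compare the proof of Theorem \ref{Theorem:boundedness} and the statement of Theorem \ref{Theorem:BoundedMultOnLipm}): a discrete product rule plus the logarithmic point-evaluation bound, which is exactly Lemma \ref{lipk_bound} for $k=1$ and which you rederive by telescoping along the geodesic; your careful treatment of the degenerate level $|v|=1$, where $\mu$ vanishes and one must fall back on $\psi\in\Linf$, is the right bookkeeping, since the paper's internal version of this result phrases the condition as $\psi'\in\LW$ with weight $\Lambda_2(|v|)=|v|(1+\log|v|)$, which differs from $\mu(v)=|v|\log|v|$ only at that level. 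Where you genuinely diverge is in the necessity direction. The paper's template tests first against normalized singleton characteristic functions $\bigchi_v/\Lambda_1(|v|+1)$, which in this same-space setting yields $|\psi(v^-)|\le\|M_\psi\|$ and hence $\psi\in\Linf$, and then against the \emph{single global} function $g(v)=\ell_1(|v|)=1+\log|v|$, whose membership in $\LipW$ with controlled norm must be verified separately, after which the already-controlled term is subtracted off to isolate $|\psi'(v)|\,\ell_1(|v|)\Lambda_1(|v|)\le(\|M_\psi\|+\nu)\|g\|_1$. You instead use sector characteristic functions $\bigchi_{S_w}$ (whose derivative is supported at the single vertex $w$, making the norm computation cleaner than for $\bigchi_v$, whose derivative also lives on the children of $v$), and, in place of $g$, a family of geodesic-localized harmonic sums $f_w$ with $\|f_w\|_{\LipW}=1$ and $f_w(w^-)\ge\log|w|$. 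The two constructions are morally the same, as your $f_w$ is a truncation of $g$ along the geodesic to $w$, but yours keeps every test function in the unit ball, so the conclusion $\mu(w)|\psi'(w)|\le\|M_\psi\|+\|\psi\|_\infty$ comes out with clean constants, whereas the paper's route pays for the convenience of a single test function by having to estimate $\|g\|_1$. Both are complete; yours is marginally more self-contained.
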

In fact, this process can be continued, thus creating the iterated logarithmic Lipschitz spaces, defined in \cite{AllenColonnaEasley:12}.  For $x \geq 1$ define the recursive sequence $\ell_j(x)$ by 
	$$\ell_{j}(x)=\begin{cases}
	x& \text{if } j=0,\\1+\log x& \text{if } j=1,\\1+\log \ell_{j-1}(x)& \text{if } j\geq 2.\end{cases}$$
In addition, the sequence $\Lambda_k(x)$ is defined as
$$\Lambda_k(x) = \begin{cases}
	1&\text{if } k=0,\\\displaystyle\prod_{j=0}^{k-1} \ell_j(x)& \text{if } k \geq 1.
	\end{cases}$$

\begin{definition}\label{Lipit} Let $T$ be a tree rooted at $o$.  For non-negative integer $k$, the {\textit{iterated logarithmic Lipschitz space}} $\Lipk$ is the set of functions $f$ on $T$ satisfying the condition $$\sup_{v\in T^*}|f'(v)|\Lambda_k(|v|)<\infty.$$  For $f \in \Lipk$, define $$\|f\|_k = |f(o)| + \sup_{v \in T^*} |f'(v)|\Lambda_k(|v|).$$
\end{definition}

Notice that $\Lip = \Lip^{(0)}$ and $\LipW = \Lip^{(1)}.$  In \cite[Proposition 3.3]{ColonnaEasley:10} and \cite[Proposition 2.2]{AllenColonnaEasley:12} it was shown that $\Lipk$ is a functional Banach space under the norm $\|f\|_k$. Moreover, the point-evaluation functionals were proven to be bounded.  With Lemma \ref{lip_bound}, we obtain the following result.

\begin{lemma}\label{lipk_bound} 
If $f \in \Lipk$ and $v \in T^*$, then $$|f(v)| \leq \begin{cases}(1+|v|)\|f\|_k&\text{if } k=0,\\\ell_k(|v|)\|f\|_k&\text{if } k\geq 1.\end{cases}$$  Moreover, if $\|f\|_k \leq 1$, then $|f(v)| \leq \ell_k(|v|)\|f\|_k$ for all $k \geq 0$.
\end{lemma}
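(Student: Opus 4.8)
The plan is to treat $k=0$ and $k\ge1$ separately, reserving the recursive structure of the $\ell_j$'s for the latter. For $k=0$ the estimate is immediate from Lemma \ref{lip_bound}: since $\Lambda_0\equiv1$ we have $\|f\|_0=|f(o)|+\|f'\|_\infty=\|f\|_\Lip$, and bounding each of $|f(o)|$ and $\|f'\|_\infty$ by $\|f\|_0$ in the estimate $|f(v)|\le|f(o)|+|v|\,\|f'\|_\infty$ yields $|f(v)|\le(1+|v|)\|f\|_0$.

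For $k\ge1$, I would first write $f$ along the geodesic. Let $o=v_0\sim v_1\sim\cdots\sim v_n=v$ be the path from $o$ to $v$, so that $|v_i|=i$ and $n=|v|$; telescoping the discrete derivative gives $f(v)-f(o)=\sum_{i=1}^n f'(v_i)$. Writing $\beta=\sup_{w\in T^*}|f'(w)|\Lambda_k(|w|)=\|f\|_k-|f(o)|$, each term obeys $|f'(v_i)|\le\beta/\Lambda_k(i)$, and hence
\[
|f(v)|\le|f(o)|+\beta\sum_{i=1}^n\frac{1}{\Lambda_k(i)}.
\]

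The crux is to show $\sum_{i=1}^n 1/\Lambda_k(i)\le\ell_k(n)$. The key observation, proved by induction from the recursion $\ell_j(x)=1+\log\ell_{j-1}(x)$, is the identity $\ell_k'(x)=1/\Lambda_k(x)$ for $x\ge1$; together with $\ell_k(1)=1$ this gives $\ell_k(n)=1+\int_1^n dx/\Lambda_k(x)$. Since each $\ell_j$ is increasing and at least $1$ on $[1,\infty)$, the product $\Lambda_k$ is increasing, so $x\mapsto1/\Lambda_k(x)$ is decreasing and $1/\Lambda_k(i)\le\int_{i-1}^i dx/\Lambda_k(x)$ for $i\ge2$; summing these and adding the $i=1$ term (which equals $1/\Lambda_k(1)=1$) produces exactly $\sum_{i=1}^n 1/\Lambda_k(i)\le 1+\int_1^n dx/\Lambda_k(x)=\ell_k(n)$. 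I expect this integral comparison to be the main obstacle, precisely because the naive bound for $k=1$, namely $\sum 1/\Lambda_1(i)=\sum 1/i$, exceeds $\ell_1(n)=1+\log n$ by the Euler constant; it is the identity $\ell_k'=1/\Lambda_k$ together with the exact matching $\ell_k(1)=1/\Lambda_k(1)=1$ that makes the constant come out on the nose.

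Finally I would combine the pieces: from the displayed inequality, $|f(v)|\le|f(o)|+\beta\,\ell_k(n)$, and since $n=|v|\ge1$ forces $\ell_k(n)\ge\ell_k(1)=1$, we have $|f(o)|\le|f(o)|\,\ell_k(n)$, whence $|f(v)|\le\ell_k(n)\big(|f(o)|+\beta\big)=\ell_k(|v|)\|f\|_k$. This already yields the final assertion for $k\ge1$; for $k=0$ the same bound $|f(v)|\le\ell_0(|v|)\|f\|_0=|v|\,\|f\|_0$ follows by sharpening the $k=0$ step to $|f(o)|+|v|\|f'\|_\infty\le|v|\big(|f(o)|+\|f'\|_\infty\big)$, valid for $|v|\ge1$ (alternatively, invoking the ``In particular'' clause of Lemma \ref{lip_bound} under the hypothesis $\|f\|_0\le1$), so that the uniform estimate $|f(v)|\le\ell_k(|v|)\|f\|_k$ holds for every $k\ge0$ and $v\in T^*$.
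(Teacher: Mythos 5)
Your proof is correct, and it is genuinely more self-contained than what the paper does: the paper gives no argument for this lemma at all, handling $k=0$ via Lemma \ref{lip_bound} and outsourcing $k\geq 1$ to the cited point-evaluation bounds of \cite[Proposition 3.3]{ColonnaEasley:10} and \cite[Proposition 2.2]{AllenColonnaEasley:12}. Your route --- telescoping $f$ along the geodesic from $o$ to $v$ to get $|f(v)| \leq |f(o)| + \beta\sum_{i=1}^{n} 1/\Lambda_k(i)$, then proving $\sum_{i=1}^{n} 1/\Lambda_k(i) \leq \ell_k(n)$ from the identity $\ell_k'(x) = 1/\Lambda_k(x)$ (a clean induction on $k$ using $\ell_k = 1+\log\ell_{k-1}$ and $\Lambda_k = \ell_{k-1}\Lambda_{k-1}$), the normalization $\ell_k(1)=\Lambda_k(1)=1$, and the monotonicity of $\Lambda_k$ --- is exactly the mechanism hidden inside that citation, and your final step of distributing the factor $\ell_k(n)\geq 1$ over $|f(o)|+\beta$ is the right way to convert $|f(o)|+\beta\,\ell_k(n)$ into $\ell_k(n)\|f\|_k$. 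What your version buys is a proof readable without consulting the references, plus the stronger observation that $|f(v)|\leq \ell_k(|v|)\|f\|_k$ holds for \emph{all} $f$ and all $k\geq 0$ (for $k=0$ via the sharpened inequality $|f(o)|+|v|\|f'\|_\infty \leq |v|\big(|f(o)|+\|f'\|_\infty\big)$ for $|v|\geq 1$), so the hypothesis $\|f\|_k\leq 1$ in the ``Moreover'' clause is not actually needed.

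Two small blemishes, neither fatal. First, your motivational aside about $k=1$ is false as stated: $\sum_{i=1}^{n}1/i$ does \emph{not} exceed $1+\log n$; rather $\sum_{i=1}^{n}1/i = \log n + \gamma + o(1)$ with $\gamma<1$, and the bound $\sum_{i=1}^{n}1/i \leq 1+\log n$ is precisely what your integral comparison proves, so the remark contradicts your own (correct) conclusion. Second, the parenthetical alternative for $k=0$ --- invoking the ``In particular'' clause of Lemma \ref{lip_bound} --- yields only $|f(v)|\leq |v|$, which is weaker than the claimed $|f(v)|\leq |v|\,\|f\|_0$ unless one first rescales $f$ by its norm; your primary sharpening argument does not have this defect, so nothing in the logical chain is affected.
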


The iterated logarithmic Lipschitz spaces are continuously embedded in one another,  as shown in the next result.

\begin{theorem} If $m \leq n$, then $\Lip^{(n)} \subseteq \Lip^{(m)}$ and $\|f\|_{m}\le \|f\|_n$ for all $f\in \Lip^{(n)}$.
\end{theorem}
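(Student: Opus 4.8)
The plan is to reduce the entire statement to a single monotonicity property of the weight sequence, namely that $\Lambda_m(x) \le \Lambda_n(x)$ for all $x \ge 1$ whenever $m \le n$. Once this is established, the rest is automatic: for any function $f$ on $T$ and any $v \in T^*$ (so that $|v| \ge 1$) we have $|f'(v)|\Lambda_m(|v|) \le |f'(v)|\Lambda_n(|v|)$; taking the supremum over $v \in T^*$ and adding $|f(o)|$ to both sides yields $\|f\|_m \le \|f\|_n$. In particular, if $f \in \Lipn$ then $\|f\|_n < \infty$, so $\|f\|_m \le \|f\|_n < \infty$ and hence $f \in \Lipm$, which gives the inclusion $\Lipn \subseteq \Lipm$. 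Thus both the norm inequality and the inclusion follow at once from the pointwise weight comparison.

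To prove the weight comparison, I would first establish the auxiliary fact that $\ell_j(x) \ge 1$ for every integer $j \ge 0$ and every $x \ge 1$. This is a short induction on $j$: the base cases $\ell_0(x) = x \ge 1$ and $\ell_1(x) = 1 + \log x \ge 1$ hold because $\log x \ge 0$ for $x \ge 1$, and for $j \ge 2$ the inductive hypothesis $\ell_{j-1}(x) \ge 1$ gives $\log \ell_{j-1}(x) \ge 0$, whence $\ell_j(x) = 1 + \log \ell_{j-1}(x) \ge 1$. In particular each $\ell_j(x)$ is positive, so every $\Lambda_k(x)$ is a product of positive terms and is itself positive.

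Finally, I would combine these observations through the factorization $\Lambda_n(x) = \Lambda_m(x)\prod_{j=m}^{n-1}\ell_j(x)$, valid for all $0 \le m \le n$ and $x \ge 1$ with the convention that the empty product (the case $m = n$) equals $1$; this also subsumes the boundary value $m = 0$, where $\Lambda_0(x) = 1$, with no separate treatment. Since each factor satisfies $\ell_j(x) \ge 1$, we get $\prod_{j=m}^{n-1}\ell_j(x) \ge 1$, and multiplying by the positive quantity $\Lambda_m(x)$ yields $\Lambda_n(x) \ge \Lambda_m(x)$, as required.

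As for the main obstacle, there is none of real substance: the argument is entirely elementary. The only point requiring mild care is applying the inequalities on the correct domain, since the comparison is needed only at integer lengths $|v| \ge 1$ (because $v \in T^*$), which is precisely where $\ell_1(x) = 1 + \log x \ge 1$ holds, together with handling the degenerate cases $m = 0$ and $m = n$ uniformly via the empty-product convention.
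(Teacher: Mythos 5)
Your proof is correct and takes essentially the same route as the paper's: both reduce everything to the pointwise weight monotonicity $\Lambda_m(x)\le\Lambda_n(x)$ for $x\ge 1$, which rests on each factor satisfying $\ell_j(x)\ge 1$. The only difference is presentational --- the paper descends one index at a time (treating $k\le 2$ and $k>2$ separately, with the inequality $1+\log\ell_{k-2}\ge 1$ left implicit), whereas you handle all indices at once via the factorization $\Lambda_n(x)=\Lambda_m(x)\prod_{j=m}^{n-1}\ell_j(x)$ and make the induction $\ell_j(x)\ge 1$ explicit.
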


\begin{proof} It suffices to show that each space $\Lip^{(k)}$ is contained in the immediate predecessor space $\Lip^{(k-1)}$.  Let $f$ be a function on $T$.  Since $$|f'(v)| \leq |v||f'(v)| \leq |v|(1+\log |v|)|f'(v)|$$ for all $v \in T^*$, we have $$\sup_{v \in T^*} |f'(v)|\Lambda_0(|v|) \leq \sup_{v \in T^*} |f'(v)|\Lambda_1(|v|) \leq \sup_{v \in T^*} |f'(v)|\Lambda_2(|v|).$$  Thus $\|f\|_0 \leq \|f\|_1 \leq \|f\|_2$, and so $\Lip^{(2)} \subseteq \Lip^{(1)} \subseteq \Lip^{(0)}$. 
	
Next, assume $k > 2$.  Then
$$\begin{aligned}
\sup_{v \in T^*} |f'(v)|\Lambda_k(|v|) &= \sup_{v \in T^*} |f'(v)|\ell_{k-1}(|v|)\Lambda_{k-1}(|v|)\\
&= \sup_{v \in T^*} |f'(v)|(1+\log \ell_{k-2}(|v|))\Lambda_{k-1}(|v|)\\
&\geq \sup_{v \in T^*} |f'(v)|\Lambda_{k-1}(|v|).
\end{aligned}$$ Thus, we have $\|f\|_{k-1} \leq \|f\|_k$, which implies $\Lip^{(k)} \subseteq \Lip^{(k-1)}$.
\end{proof}

The following result is inspired by \cite[Lemma 2.10]{Tjani:96}, where it was proved for Banach spaces of analytic functions on $\D$.  

\begin{lemma}\label{Lemma:compactness}\cite[Lemma 2.5]{AllenCraig:2015} Let $X$  and  $Y$ be Banach spaces of functions on $T$.  Suppose that
	\begin{enumerate}
		\item the point-evaluation functionals of $X$ are bounded;
		\item the closed unit ball of $X$ is a compact subset of $X$ in the topology of pointwise convergence; 
		\item $T:X \to Y$ is bounded when $X$ and $Y$ are given the topology of pointwise convergence.
	\end{enumerate}  Then $T$ is a compact operator if and only if given a bounded sequence $(f_n)$ in $X$ converging to 0 pointwise, the sequence $(T f_n)$ converges to zero in the norm of $Y$.
\end{lemma}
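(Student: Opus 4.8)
The plan is to prove both implications directly, exploiting the fact that, since the tree $T$ is locally finite, its vertex set is countable; consequently the topology of pointwise convergence on the space of all complex-valued functions on $T$ coincides with the product topology on $\C^T$ and is therefore metrizable. On a metrizable space compactness and sequential compactness coincide, and this is precisely what allows us to pass between the topological hypotheses (2)--(3) and the sequential statement in the conclusion. Throughout, I also use that, because the point-evaluation functionals are bounded, the pointwise topology is coarser than the norm topology on each space, so that norm convergence forces pointwise convergence.

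For the forward implication, suppose the operator $T$ is compact and let $(f_n)$ be a bounded sequence in $X$ with $f_n \to 0$ pointwise; the goal is $\|Tf_n\|_Y \to 0$. Arguing by contradiction, if this failed there would be an $\varepsilon > 0$ and a subsequence with $\|Tf_{n_k}\|_Y \geq \varepsilon$. Since $(f_{n_k})$ is bounded and $T$ is compact, a further subsequence $(Tf_{n_{k_j}})$ would converge in the norm of $Y$ to some $g$ with $\|g\|_Y \geq \varepsilon$, so $g \neq 0$. On the other hand, hypothesis (3) gives that $T$ is continuous for the pointwise topologies, whence $Tf_{n_{k_j}} \to T0 = 0$ pointwise. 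Because norm convergence in $Y$ entails pointwise convergence, the two limits must agree, forcing $g = 0$, a contradiction.

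For the reverse implication, assume the sequential condition and let $(g_n)$ be an arbitrary sequence in the closed unit ball of $X$; to establish compactness of $T$ it suffices to extract a subsequence $(Tg_{n_k})$ that converges in the norm of $Y$. By hypothesis (2) the closed unit ball of $X$ is compact in the pointwise topology, and since that topology is metrizable it is sequentially compact; hence $(g_n)$ has a subsequence $(g_{n_k})$ converging pointwise to some $g$ lying in the closed unit ball of $X$. Then $(g_{n_k} - g)$ is a bounded sequence converging to $0$ pointwise, so the hypothesis together with the linearity of $T$ yields $\|Tg_{n_k} - Tg\|_Y = \|T(g_{n_k} - g)\|_Y \to 0$. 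Thus $(Tg_{n_k})$ converges in norm to $Tg$, as required.

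The main obstacle is the reverse direction, and specifically the passage from the topological compactness of the unit ball of $X$ in hypothesis (2) to the extraction of a \emph{pointwise-convergent subsequence}: this is exactly where the countability of the vertex set of $T$ (hence metrizability of the pointwise topology) is essential, since compactness alone does not yield sequential compactness in general. A secondary point requiring care is the identification $g = 0$ in the forward direction, which rests on norm convergence in the function space $Y$ implying pointwise convergence; in the applications of this paper this always holds, since the point-evaluations on each $\Lip^{(k)}$ are bounded by Lemma \ref{lipk_bound}.
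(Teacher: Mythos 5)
You should note first that the paper itself contains no proof of this lemma: it is quoted, with citation, from \cite[Lemma 2.5]{AllenCraig:2015}, so the only comparison available is with the standard Tjani-type argument that the cited source adapts to trees---and your proof is exactly that argument, correctly executed. Your key observation, that local finiteness together with connectedness makes the vertex set countable and hence the pointwise (product) topology metrizable, is precisely what turns the compactness hypothesis (2) into the sequential compactness needed to extract a pointwise-convergent subsequence in the reverse direction; a coordinate-wise diagonal extraction using hypothesis (1) would serve equally well. The one delicate point is the one you flagged yourself: concluding $g=0$ in the forward direction requires that norm convergence in $Y$ imply pointwise convergence, i.e., boundedness of the point-evaluation functionals of $Y$, which is not literally among hypotheses (1)--(3). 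That is an imprecision in the statement as quoted (an implicit standing assumption on what ``Banach spaces of functions'' means) rather than a defect of your argument, and it is harmless in context: in every application in this paper $Y$ is $\Lip$ or some $\Lipn$, whose point evaluations are bounded by Lemma \ref{lipk_bound}, so your proof applies verbatim.
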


\section{Boundedness and Operator Norm}\label{Section:Boundedness}
In this section, we characterize the bounded multiplication operators between distinct iterated logarithmic Lipschitz spaces, and, in particular, between the Lipschitz and weighted Lipschitz spaces.  In addition, we provide estimates on the operator norm.  

The boundedness of $M_\psi$ of $\Lipk$ was proven for $k=0$ in \cite[Theorem 3.6]{ColonnaEasley:10} and for $k \geq 1$ in \cite[Theorem 3.1]{AllenColonnaEasley:12}, showing that the bounded functions in $\Lipk$ induce bounded multiplication operators on $\Lip^{(k+1)}$.
\begin{theorem}\label{Theorem:BoundedMultOnLipm}
Let $k$ be a non-negative integer.  Then the operator $M_\psi$ is bounded on $\Lipk$ if and only if $\psi \in L^\infty \cap \Lip^{(k+1)}$ for every non-negative $k$. Furthermore, the following estimates hold:
	$$\max\{\|\psi\|_k,\|\psi\|_\infty\} \leq \|M_\psi\| \leq \|\psi\|_\infty + \sup_{v \in T^*} |\psi'(v)|\Lambda_{k+1}(|v|).$$
\end{theorem}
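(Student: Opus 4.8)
The plan is to base everything on the discrete product rule: for $v\in T^*$, adding and subtracting $\psi(v)f(v^-)$ in $\psi(v)f(v)-\psi(v^-)f(v^-)$ gives
$$(\psi f)'(v)=\psi(v)f'(v)+\psi'(v)f(v^-).$$
I would record this identity first, since both the norm estimate and all the test-function computations depend on it. Note also the factorization $\Lambda_{k+1}(|v|)=\ell_k(|v|)\Lambda_k(|v|)$, which is the bridge between the target condition ($\Lambda_k$-weighted) and the hypothesis ($\Lambda_{k+1}$-weighted).

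For sufficiency and the upper estimate, assume $\psi\in L^\infty\cap\Lip^{(k+1)}$. Using the product rule I split $\sup_{v}|(\psi f)'(v)|\Lambda_k(|v|)$ into a first term bounded by $\|\psi\|_\infty\sup_v|f'(v)|\Lambda_k(|v|)$, and a second term $\sup_v|\psi'(v)|\,|f(v^-)|\Lambda_k(|v|)$. Here Lemma~\ref{lipk_bound}, together with $\ell_k(1)=1$ and the monotonicity of $\ell_k$, yields $|f(v^-)|\le\ell_k(|v|)\|f\|_k$, so that the second term is at most $\|f\|_k\sup_v|\psi'(v)|\Lambda_{k+1}(|v|)$. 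Combining these with $|(\psi f)(o)|\le\|\psi\|_\infty|f(o)|$ and regrouping the two occurrences of $\|\psi\|_\infty$ gives
$$\|M_\psi f\|_k\le\Big(\|\psi\|_\infty+\sup_{v\in T^*}|\psi'(v)|\Lambda_{k+1}(|v|)\Big)\|f\|_k,$$
which establishes both boundedness and the right-hand estimate.

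For the two easy lower bounds, testing against the constant function $\mathbf{1}$ (which has norm $1$ and satisfies $M_\psi\mathbf{1}=\psi$) gives $\|\psi\|_k\le\|M_\psi\|$. For $\|\psi\|_\infty\le\|M_\psi\|$ I would test against the characteristic function $\chi_{S_w}$ of the sector at a vertex $w\in T^*$: its only nonzero discrete derivative occurs at $w$ (with value $1$), so $\|\chi_{S_w}\|_k=\Lambda_k(|w|)$, while $(\psi\chi_{S_w})'(w)=\psi(w)$ forces $\|M_\psi\chi_{S_w}\|_k\ge|\psi(w)|\Lambda_k(|w|)$; dividing by $\Lambda_k(|w|)>0$ gives $|\psi(w)|\le\|M_\psi\|$, with the case $w=o$ covered by $\mathbf{1}$.

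The main obstacle is the remaining half of necessity: the bounds above only yield $\psi\in L^\infty\cap\Lipk$, and one must upgrade this to $\psi\in\Lip^{(k+1)}$. For $w$ with $|w|=N\ge 2$ I would use the confluent $v\wedge w$ (the last common vertex of the geodesics $[o,v]$ and $[o,w]$) to define
$$f_w(v)=\sum_{i=1}^{\min\{|v\wedge w|,\,N-1\}}\frac{1}{\Lambda_k(i)}.$$
A case analysis on whether $v$ lies on the geodesic $[o,w]$ shows $f_w'(v)=1/\Lambda_k(|v|)$ for $1\le|v|\le N-1$ on that geodesic and $f_w'(v)=0$ otherwise; in particular $f_w'(w)=0$ and $\|f_w\|_k=1$. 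The product rule then gives $(M_\psi f_w)'(w)=\psi'(w)\sum_{i=1}^{N-1}\Lambda_k(i)^{-1}$, so $|\psi'(w)|\Lambda_k(N)\sum_{i=1}^{N-1}\Lambda_k(i)^{-1}\le\|M_\psi\|$. The decisive estimate is $\sum_{i=1}^{N-1}\Lambda_k(i)^{-1}\ge\ell_k(N)-1$, which I would obtain from the identity $\ell_k'(x)=1/\Lambda_k(x)$ and a decreasing-integrand comparison with $\int_1^N\ell_k'(x)\,dx$. Since $\Lambda_{k+1}(N)=\ell_k(N)\Lambda_k(N)$, this converts the inequality into $|\psi'(w)|\Lambda_{k+1}(|w|)\le\frac{\ell_k(N)}{\ell_k(N)-1}\|M_\psi\|\le C_k\|M_\psi\|$ uniformly in $w$, the level-one vertices being handled separately via $\|\psi\|_k\le\|M_\psi\|$ and $\Lambda_{k+1}(1)=\Lambda_k(1)$. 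I expect this step to be where the real work lies: guessing the truncated-confluent test function, checking that its norm is exactly $1$ while $f_w'(w)=0$, and extracting the sharp asymptotics $\sum\Lambda_k^{-1}\sim\ell_k$.
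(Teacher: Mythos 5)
Your proof is correct, but it follows a genuinely different route from the one behind the paper's statement. Note first that the paper gives no proof of this theorem at all: it is quoted from \cite[Theorem 3.6]{ColonnaEasley:10} ($k=0$) and \cite[Theorem 3.1]{AllenColonnaEasley:12} ($k\ge 1$), and the cited argument is visible in outline in the proof of Theorem \ref{Theorem:boundedness} here. That argument obtains the hard half of necessity from a \emph{single} unbounded test function, $g(v)=\ell_k(|v|)$ for $v\ne o$, whose membership in $\Lipk$ with $\|g\|_k\le 2\prod_{j=1}^k\ell_j(2)$ is itself a lemma (proved via the mean value theorem and the identity $\ell_k'=1/\Lambda_k$); one then writes $|\psi'(v)|\ell_k(|v|)\Lambda_k(|v|)=|\psi'(v)||g(v)|\Lambda_k(|v|)\le |(\psi g)'(v)|\Lambda_k(|v|)+|\psi(v^-)||g'(v)|\Lambda_k(|v|)$ and absorbs the last term using $|\psi(v^-)|\le\|\psi\|_\infty$. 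You instead construct, for each target vertex $w$ with $|w|=N$, the truncated geodesic ``discrete antiderivative'' $f_w$ with exact norm one and $f_w'(w)=0$, and then convert $\sum_{i=1}^{N-1}\Lambda_k(i)^{-1}\ge\ell_k(N)-1$ using the same calculus identity $\ell_k'=1/\Lambda_k$, but through an integral comparison rather than through the norm lemma for $g$. I checked the details and they are sound: the case analysis for $f_w'$ (including vertices branching off the geodesic and descendants of $w$), the exact norm $\|f_w\|_k=1$, the uniform constant $\ell_k(N)/(\ell_k(N)-1)\le\ell_k(2)/(\ell_k(2)-1)$ since $\ell_k(2)>1$ and $x\mapsto x/(x-1)$ is decreasing, and the separate treatment of $|w|=1$ via $\Lambda_{k+1}(1)=\Lambda_k(1)=1$. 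Your sufficiency direction uses the decomposition $(\psi f)'(v)=\psi(v)f'(v)+\psi'(v)f(v^-)$ rather than the paper's $(\psi f)'(v)=\psi'(v)f(v)+\psi(v^-)f'(v)$; both give the stated upper estimate, and your lower bound $\|\psi\|_\infty\le\|M_\psi\|$ via sector characteristic functions $\bigchi_{S_w}$ replaces the normalized point masses $\bigchi_v/\Lambda_k(|v|+1)$ used in Theorem \ref{Theorem:boundedness}. What each approach buys: the cited route is shorter once the lemma on $g$ is available and reuses one test function for all vertices, while yours is self-contained, produces norm-one test functions with an exactly computable image derivative, and yields an explicit uniform constant in the necessity estimate.
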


To characterize the boundedness of $M_\psi$ acting between distinct iterated logarithmic spaces we define the following two quantities.  For $\psi$ a function on $T$, and $m$ and $n$ distinct non-negative integers, define 
$$\begin{aligned}
\mu_{\psi,m,n} &= \sup_{v \in T^*} |\psi'(v)|\ell_m(|v|)\Lambda_n(|v|),\\
\nu_{\psi,m,n} &= \sup_{v \in T^*} \frac{|\psi (v^-)|\Lambda_n(|v|)}{\Lambda_m(|v|)}.
\end{aligned}$$

\begin{theorem}\label{Theorem:boundedness}
Let $m$ and $n$ be distinct non-negative integers.  Then the operator $M_\psi:\Lipm \to \Lipn$ is bounded if and only if $\mu_{\psi,m,n}$ and $\nu_{\psi,m,n}$ are both finite.  Furthermore, under these conditions, $$\|\psi\|_n \leq \|M_\psi\| \leq |\psi(o)| + \mu_{\psi,m,n} + \nu_{\psi,m,n}.$$
\end{theorem}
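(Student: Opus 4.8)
The plan is to reduce the whole statement to the discrete product rule
$$(\psi f)'(v) = \psi'(v)\,f(v) + \psi(v^-)\,f'(v), \qquad v \in T^*,$$
obtained from the definition of the discrete derivative by adding and subtracting $\psi(v^-)f(v)$. Its two summands are precisely the quantities weighted by $\mu_{\psi,m,n}$ and $\nu_{\psi,m,n}$, so each half of the proof amounts to isolating one summand. For sufficiency together with the upper norm estimate, I would assume both quantities finite, multiply the product rule by $\Lambda_n(|v|)$, and apply the triangle inequality to obtain
$$|(\psi f)'(v)|\Lambda_n(|v|) \le |\psi'(v)|\,|f(v)|\,\Lambda_n(|v|) + |\psi(v^-)|\,|f'(v)|\,\Lambda_n(|v|).$$
Substituting the point-evaluation bound $|f(v)| \le \ell_m(|v|)\|f\|_m$ (the ``moreover'' clause of Lemma \ref{lipk_bound}, rescaled by homogeneity) into the first term yields $\mu_{\psi,m,n}\|f\|_m$, while $|f'(v)| \le \|f\|_m/\Lambda_m(|v|)$ turns the second into $\nu_{\psi,m,n}\|f\|_m$; adding $|\psi(o)f(o)| \le |\psi(o)|\|f\|_m$ gives $\|\psi f\|_n \le (|\psi(o)| + \mu_{\psi,m,n} + \nu_{\psi,m,n})\|f\|_m$. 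The lower bound $\|\psi\|_n \le \|M_\psi\|$ then comes for free by testing on the constant function $\mathbf 1$, for which $\|\mathbf 1\|_m = 1$ and $M_\psi\mathbf 1 = \psi$.

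The content is in the necessity direction: assuming $M_\psi$ bounded, I must produce test functions of controlled $\Lipm$-norm detecting each quantity. For $\nu_{\psi,m,n}$ at a vertex $w$ with $|w|\ge 2$, I would take $f = \Lambda_m(|w|)^{-1}(\bigchi_{S_w} - \bigchi_{S_{w^-}})$; this vanishes at $o$, has discrete derivative supported on $\{w,w^-\}$ with $\|f\|_m = 1$, and collapses the product rule to $(\psi f)'(w) = \psi(w^-)/\Lambda_m(|w|)$, so that $\|M_\psi\| \ge |(\psi f)'(w)|\Lambda_n(|w|) = |\psi(w^-)|\Lambda_n(|w|)/\Lambda_m(|w|)$. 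For $\mu_{\psi,m,n}$ I instead need $f'(w) = 0$ (to kill the $\psi(v^-)f'$ term) together with $f(w)$ as large as the point-evaluation bound allows. I would let $f$ depend only on the confluent of $v$ and $w$: set $f(o)=0$ and $f$ equal to $\ell_m(j)$ at the $j$-th vertex of the geodesic from $o$ to $w$ for $1 \le j \le |w|-1$, hold it flat at the final step so that $f'(w)=0$, and extend it constantly on every sector branching off the geodesic. Then $(\psi f)'(w) = \psi'(w)\,\ell_m(|w|-1)$, which is comparable to the summand $|\psi'(w)|\ell_m(|w|)\Lambda_n(|w|)$ defining $\mu_{\psi,m,n}$ because $\ell_m(|w|-1)$ and $\ell_m(|w|)$ are comparable for $|w|\ge 2$.

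The main obstacle is the norm estimate $\|f\|_m \le C_m$ for this second test function, with $C_m$ independent of $w$. It hinges on the identity $\ell_m'(x) = 1/\Lambda_m(x)$, which via the mean value theorem gives $\ell_m(j) - \ell_m(j-1) \le 1/\Lambda_m(j-1)$; hence each nonzero term $|f'|\Lambda_m$ along the geodesic is at most $\Lambda_m(j)/\Lambda_m(j-1)$, and one checks this ratio is bounded uniformly in $j$. This yields $|\psi'(w)|\ell_m(|w|)\Lambda_n(|w|) \le C_m'\|M_\psi\|$ for all $|w|\ge 2$, so $\mu_{\psi,m,n}<\infty$. The finitely many vertices of length $1$ are handled separately: since $\ell_j(1)=1$ and hence $\Lambda_k(1)=1$ for every $k$, they contribute only a bounded amount to either supremum. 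Finally, none of these estimates uses the relative size of $m$ and $n$, so a single argument covers both $m<n$ and $m>n$.
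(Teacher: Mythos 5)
Your proposal is correct. The sufficiency direction and both norm bounds coincide with the paper's argument (discrete product rule, Lemma \ref{lipk_bound}, testing on the constant function $1$), but your necessity direction takes a genuinely different route. For $\nu_{\psi,m,n}$ the difference is minor: the paper tests on the normalized one-point indicator $f_v = \bigchi_v/\Lambda_m(|v|+1)$ and reads off the estimate at the children of $v$, while you use the sector difference $\Lambda_m(|w|)^{-1}\left(\bigchi_{S_w}-\bigchi_{S_{w^-}}\right)$; both are norm-one bumps isolating the same quantity. For $\mu_{\psi,m,n}$, however, the paper uses a single global test function $g(v)=\ell_m(|v|)$ (with $g(o)=0$), imports the bound $\|g\|_m \le 2\prod_{j=1}^{m}\ell_j(2)$ from \cite{AllenColonnaEasley:12}, and runs the product rule backwards; this forces it to absorb the cross term $|\psi(v^-)||g'(v)|\Lambda_n(|v|)$ using the \emph{previously established} finiteness of $\nu_{\psi,m,n}$, ending with the coupled estimate $\mu_{\psi,m,n} \le \left(\|M_\psi\|+\nu_{\psi,m,n}\right)\|g\|_m$. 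Your per-vertex construction --- $\ell_m$ along the geodesic to $w$, flattened at the last step so that $f'(w)=0$, constant off the geodesic --- makes the cross term vanish identically, so each supremum is estimated independently of the other, directly by $C_m\|M_\psi\|$. The price is that you must verify the uniform norm bound $\|f\|_m\le C_m$ yourself, and your MVT argument does this correctly: $\ell_m'=1/\Lambda_m$ gives $|f'(u_j)|\Lambda_m(j)\le\Lambda_m(j)/\Lambda_m(j-1)$, which is at most $2^m$ since each factor $\ell_i(j)/\ell_i(j-1)$ is at most $2$ for $j\ge 2$; this is essentially the computation the paper outsources to the earlier reference. In short, the paper's necessity proof is shorter but leans on prior work and on the $\nu$-estimate, while yours is self-contained, decouples the two quantities, and yields the cleaner conclusion $\max\{\mu_{\psi,m,n},\nu_{\psi,m,n}\}\le C_m\|M_\psi\|$ with $C_m$ depending only on $m$.
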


\begin{proof} Suppose $\mu_{\psi,m,n}$ and $\nu_{\psi,m,n}$ are both finite.  Let $f \in \Lipm$ such that $\|f\|_m \leq 1$.  By Lemma \ref{lipk_bound}, for $v \in T^*$
$$\begin{aligned}
|(\psi f)'(v)|\Lambda_n(|v|) &\leq |\psi'(v)||f(v)|\Lambda_n(|v|) + |\psi(v^-)||f'(v)|\Lambda_n(|v|)\\
&\leq |\psi'(v)|\ell_m(|v|)\Lambda_n(|v|)\|f\|_m + \frac{|\psi(v^-)||f'(v)|\Lambda_n(|v|)\Lambda_m(|v|)}{\Lambda_m(|v|)}\\
&\leq |\psi'(v)|\ell_m(|v|)\Lambda_n(|v|)\|f\|_m + \frac{|\psi(v^-)|\Lambda_n(|v|)}{\Lambda_m(|v|)}\|f\|_m\\
&\leq \mu_{\psi,m,n} + \nu_{\psi,m,n}.
\end{aligned}$$  So $\psi f \in \Lipn$ with $$\|M_\psi f\|_n = |\psi(o)f(o)| + \sup_{v \in T^*} |(\psi f)'(v)|\Lambda_n(|v|) \leq |\psi(o)| + \mu_{\psi,m,n}+\nu_{\psi,m,n}.$$  Thus $M_\psi : \Lipm \to \Lipn$ is bounded, and the upper bound of the operator norm is established.

Next, suppose $M_\psi : \Lipm \to \Lipn$ is bounded.  Then $\psi = M_\psi 1$ is an element of $\Lipn$.  So $\|\psi\|_n \leq \|M_\psi\|$, establishing the lower bound of the operator norm.  

To prove the finiteness of $\nu_{\psi,m,n}$, fix a vertex $v \in T$ and define for all $w \in T$ $$f_v(w) = \frac{\bigchi_v(w)}{\Lambda_m(|v|+1)},$$ where $\bigchi_v$ denotes the characteristic function of $\{v\}$.  Thus $$|f'_v(w)| = \begin{cases}\displaystyle\frac{1}{\Lambda_m(|v|+1)} & \text{if } w = v \text{ or } w^- = v,\\0 & \text{otherwise}.\end{cases}$$  From this, we obtain $f_v \in \Lipm$ since $\sup_{w \in T^*} |f'_v(w)|\Lambda_m(|w|) = 1$.  Since $M_\psi$ is bounded, 
$$\frac{|\psi(v)|\Lambda_n(|v|+1)}{\Lambda_m(|v|+1)} = \sup_{w \in T^*} |(\psi f_v)'(w)|\Lambda_n(|w|) \leq \|M_\psi\|.$$  Thus, since $v$ is arbitrary, we have 
\begin{equation}\label{bounded_lower_bound}\nu_{\psi,m,n} = \sup_{v \in T^*} \frac{|\psi(v^-)|\Lambda_n(|v|)}{\Lambda_m(|v|)} \leq \|M_\psi\| < \infty.\end{equation} 

Finally, to prove the finiteness of $\mu_{\psi,m,n}$, define 
$$g(v) = \begin{cases}
0&\text{if } v=o,\\
\ell_m(|v|)&\text{if } v \neq o.
\end{cases}\label{f=lk}$$
The function $g$ is an element of $\Lipm$ for all $m \geq 0$ since $\|g\|_0 = 1$ and for $m \geq 1$, $\|g\|_m \le 2\prod_{j=1}^m \ell_j(2)$, as shown in \cite[proof of Theorem 3.1]{AllenColonnaEasley:12}. From the boundedness of $M_\psi$, we have $$\sup_{v \in T^*} |(\psi g)'(v)|\Lambda_n(|v|) \leq \|M_\psi\|\|g\|_{m}.$$  On the other hand, for $v \in T^*$ we obtain 
$$\begin{aligned} 
|\psi'(v)|\ell_m(|v|)\Lambda_n(|v|) &= |\psi'(v)||g(v)|\Lambda_n(|v|)\\ 
&\leq |(\psi g)'(v)|\Lambda_n(|v|) + |\psi(v^-)||g'(v)|\Lambda_n(|v|)\\
&\leq \|M_\psi\|\|g\|_m + \frac{|\psi(v^-)|\Lambda_n(|v|)}{\Lambda_m(|v|)}\|g\|_m.
\end{aligned}$$  Taking the supremum over all $v \in T^*$, we obtain $$\mu_{\psi,m,n} \leq \big(\|M_\psi\| + \nu_{\psi,m,n}\big)\|g\|_m.$$  Thus $\mu_{\psi,m,n}$ is finite.
\end{proof}

\section{Compactness}\label{Section:Compactness}
In this section, we characterize the compact multiplication operators among those studied in Section \ref{Section:Boundedness}.  As was the case for boundedness of $M_\psi$ acting from $\Lipm$ to $\Lipn$, the characterizing quantities for compactness are precisely $|\psi'(v)|\ell_m(|v|)\Lambda_n(|v|)$ and $\frac{|\psi(v^-)\Lambda_n(|v|)}{\Lambda_m(|v|)}$.  The characterization for the bounded multiplication operators used the growth behavior, or ``Big-Oh", of these quantities.  As is seen in other situations, the compactness is characterized by the asymptotic behavior, or ``little-oh", of these quantities.  To this end, define for $v \in T^*$ 

$$\begin{aligned}
\mu_{\psi,m,n}(v) &= |\psi'(v)|\ell_m(|v|)\Lambda_n(|v|),\\
\nu_{\psi,m,n}(v) &= \frac{|\psi(v^-)|\Lambda_n(|v|)}{\Lambda_m(|v|)}.
\end{aligned}$$

\noindent Observe the reuse of notation from Section \ref{Section:Boundedness}, since $\mu_{\psi,m,n} = \sup_{v \in T^*} \mu_{\psi,m,n}(v)$ and likewise for $\nu_{\psi,m,n}$.

\begin{theorem}
Let $m$ and $n$ be distinct non-negative integers.  Then the bounded operator $M_\psi:\Lipm \to \Lipn$ is compact if and only if $$\lim_{|v|\to\infty} \mu_{\psi,m,n}(v) = 0 \text{ and } \lim_{|v|\to\infty} \nu_{\psi,m,n}(v) = 0.$$
\end{theorem}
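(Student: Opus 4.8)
The plan is to apply the compactness criterion of Lemma \ref{Lemma:compactness} with $X = \Lipm$ and $Y = \Lipn$. Its three hypotheses are readily checked: the point-evaluation functionals of $\Lipm$ are bounded by Lemma \ref{lipk_bound}; the closed unit ball of $\Lipm$ is compact in the topology of pointwise convergence, by a standard diagonalization over the countable vertex set together with the lower semicontinuity of $\|\cdot\|_m$ under pointwise limits; and $M_\psi$ is pointwise-continuous because $\psi$ is fixed, so $f_j \to f$ pointwise forces $\psi f_j \to \psi f$ pointwise. Thus $M_\psi$ is compact if and only if $\|M_\psi f_j\|_n \to 0$ whenever $(f_j)$ is bounded in $\Lipm$ and converges to $0$ pointwise.

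For sufficiency, assume $\mu_{\psi,m,n}(v) \to 0$ and $\nu_{\psi,m,n}(v) \to 0$, and let $(f_j)$ be bounded in $\Lipm$, say $\|f_j\|_m \le C$, with $f_j \to 0$ pointwise. Exactly as in the proof of Theorem \ref{Theorem:boundedness}, the product rule for the discrete derivative together with the estimates $|f_j(v)| \le \ell_m(|v|)\|f_j\|_m$ and $|f_j'(v)|\Lambda_m(|v|) \le \|f_j\|_m$ from Lemma \ref{lipk_bound} give, for every $v \in T^*$,
$$|(\psi f_j)'(v)|\Lambda_n(|v|) \le \|f_j\|_m\big(\mu_{\psi,m,n}(v) + \nu_{\psi,m,n}(v)\big).$$
Given $\varepsilon > 0$, I would choose $N$ so that $\mu_{\psi,m,n}(v) + \nu_{\psi,m,n}(v) < \varepsilon/C$ whenever $|v| > N$, which controls the supremum over $\{|v| > N\}$ uniformly in $j$. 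Over the finite set $\{v \in T^* : |v| \le N\}$, pointwise convergence gives $f_j(v)\to 0$, hence $(\psi f_j)'(v)\to 0$, so this part of the supremum, together with the term $|\psi(o)f_j(o)|$, tends to $0$ as $j \to \infty$. Letting $\varepsilon \to 0$ yields $\|M_\psi f_j\|_n \to 0$, so $M_\psi$ is compact.

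For necessity, I would produce, for an arbitrary sequence $(v_j)$ with $|v_j| \to \infty$, bounded sequences in $\Lipm$ tending to $0$ pointwise that force the two quantities to vanish. The characteristic test functions $f_{v_j} = \bigchi_{v_j}/\Lambda_m(|v_j|+1)$ from the proof of Theorem \ref{Theorem:boundedness} already have unit $\Lipm$-norm and converge to $0$ pointwise, and the same computation gives $\|M_\psi f_{v_j}\|_n \ge |\psi(v_j)|\Lambda_n(|v_j|+1)/\Lambda_m(|v_j|+1)$; compactness forces the left side to $0$, which is precisely the statement $\nu_{\psi,m,n}(v) \to 0$ (after relabelling by the parent). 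To handle $\mu$, I would localize the function $g(v) = \ell_m(|v|)$: letting $w_j$ be the ancestor of $v_j$ at a level $b_j \to \infty$ chosen slowly enough that $\ell_m(b_j) = o(\ell_m(|v_j|))$, set $g_j(w) = \ell_m(|w|) - \ell_m(b_j)$ for $w \in S_{w_j}$ and $g_j(w) = 0$ otherwise. Because the jump at $w_j$ is cancelled, $\|g_j\|_m$ stays uniformly bounded by the estimate for $g$, while $g_j$ vanishes at every fixed vertex once $b_j$ exceeds its length, so $g_j \to 0$ pointwise; moreover $g_j(v_j) = \ell_m(|v_j|) - \ell_m(b_j) \sim \ell_m(|v_j|)$.

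Evaluating at $v_j$ and using $(\psi g_j)'(v_j) = \psi'(v_j)g_j(v_j) + \psi(v_j^-)g_j'(v_j)$, I would bound
$$\mu_{\psi,m,n}(v_j)\,\frac{g_j(v_j)}{\ell_m(|v_j|)} \le |(\psi g_j)'(v_j)|\Lambda_n(|v_j|) + \nu_{\psi,m,n}(v_j)\,|g_j'(v_j)|\Lambda_m(|v_j|).$$
Since $\|M_\psi g_j\|_n \to 0$ by compactness forces the first term on the right to $0$, since $|g_j'(v_j)|\Lambda_m(|v_j|) \le \|g_j\|_m = O(1)$, and since $\nu_{\psi,m,n}(v_j) \to 0$ has already been shown, the whole right-hand side tends to $0$; as $g_j(v_j)/\ell_m(|v_j|) \to 1$, this gives $\mu_{\psi,m,n}(v_j) \to 0$. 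The main obstacle is precisely this necessity argument for $\mu$: one must design a single family of norm-bounded test functions that converges to $0$ pointwise (which rules out the global $g$ itself) yet still attains a value comparable to $\ell_m(|v_j|)$ at $v_j$, and the slow-growth choice of the cutoff level $b_j$ is what reconciles these competing demands. The order of the argument also matters: $\nu_{\psi,m,n}(v) \to 0$ must be established first, so that the cross term $\psi(v_j^-)g_j'(v_j)$ can be absorbed.
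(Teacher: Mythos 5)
Your proposal is correct, and it splits into a part that mirrors the paper and a part that is genuinely different. The sufficiency direction and the necessity of $\nu_{\psi,m,n}(v)\to 0$ are essentially the paper's own arguments: the same product-rule estimate with the region $\{|v|>N\}$ controlled uniformly and the finite set $\{|v|\le N\}$ handled by pointwise convergence, and the same normalized characteristic functions (the paper places them at $v_k^-$, you place them at $v_j$ and relabel by the parent, which amounts to the same test). Where you diverge is the necessity of $\mu_{\psi,m,n}(v)\to 0$. The paper tests against radial ``plateau'' functions $h_k$, taken from \cite[Theorem 6.2]{AllenColonnaEasley:12}, which equal $\ell_m(|v_k|)$ on all levels $|w|\ge |v_k|-1$ and are damped by the factor $1/\ell_m(|v_k|)$ on lower levels; pointwise convergence comes from the damping, and, crucially, $h_k$ is constant across levels $|v_k|-1$ and $|v_k|$, so $h_k'(v_k)=0$ and the cross term $\psi(v_k^-)h_k'(v_k)$ vanishes identically, giving $\mu_{\psi,m,n}(v_k)=|(\psi h_k)'(v_k)|\Lambda_n(|v_k|)\le\|M_\psi h_k\|_n\to 0$ with no reference to $\nu$. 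Your sector-localized cutoff $g_j=\bigl(\ell_m(|\cdot|)-\ell_m(b_j)\bigr)\bigchi_{S_{w_j}}$ instead obtains pointwise convergence because the support escapes to infinity, keeps a nonzero cross term, and absorbs it using $|g_j'(v_j)|\Lambda_m(|v_j|)=O(1)$ together with the previously established $\nu_{\psi,m,n}(v_j)\to 0$; you are right that this ordering is forced, since for $m=0$ one has $|g_j'(v_j)|\Lambda_0(|v_j|)=1$ exactly, so the cross term is not small on its own. The trade-off: the paper's construction decouples the two limits completely but relies on an external norm estimate for $h_k$, whereas yours is self-contained (the uniform bound $\|g_j\|_m\le\|g\|_m$ follows from the jump cancellation at $w_j$ and the bound for $g$ already used in the proof of Theorem \ref{Theorem:boundedness}) at the price of coupling the $\mu$ argument to the $\nu$ argument. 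Your verification of the hypotheses of Lemma \ref{Lemma:compactness}, which the paper leaves implicit, is also sound.
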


\begin{proof}
First, suppose $\mu_{\psi,m,n}(v) \to 0$ and $\nu_{\psi,m,n}(v) \to 0$ as $|v| \to \infty$.  Let $(f_k)$ be a bounded sequence in $\Lipm$ which converges to 0 pointwise, and define $s = \sup_{k \in \N} \|f_k\|_m$.  By Lemma \ref{Lemma:compactness}, it suffices to show that $\|M_\psi f_k\|_n \to 0$ as $k \to \infty$.

Fix $\varepsilon > 0$.  Since $f_k \to 0$ pointwise, $|f_k(o)| < \frac{\varepsilon}{3s}$ for all $k$ sufficiently large.  By assumption, there exists $M > 0$ such that $$|\psi'(v)|\ell_m(|v|)\Lambda_n(|v|) < \frac{\varepsilon}{3s}$$ and $$\frac{|\psi(v^-)|\Lambda_n(|v|)}{\Lambda_m(|v|)} < \frac{\varepsilon}{3s}$$ for all $|v| \geq M$.

Applying Lemma \ref{lipk_bound}, and normalizing $f_k$ if $\|f_k\|_m > 1$, for all $|v| \geq M$
$$\begin{aligned}
|(\psi f_k)'(v)|\Lambda_n(|v|) &\leq |\psi'(v)||f_k(v)|\Lambda_n(|v|) + |\psi(v^-)||f'_k(v)|\Lambda_n(|v|)\\
&\leq |\psi'(v)|\ell_m(|v|)\Lambda_n(|v|)\|f_k\|_m + \frac{|\psi(v^-)|\Lambda_n(|v|)}{\Lambda_m(|v|)}\|f_k\|_m\\
&< \frac{2\varepsilon}{3}.
\end{aligned}$$

\noindent Since $(f_k)$ converges to 0 on $\left\{v \in T : |v| \leq M\right\}$, so does the sequence $\left(|(\psi f_k)'(v)|\Lambda_n(|v|)\right)$.  So for sufficiently large $k$, and all $v \in T^*$, we have $|(\psi f_k)'(v)|\Lambda_n(|v|) < \frac{2\varepsilon}{3}$.  Thus $$\|M_\psi f_k\|_n = |\psi(o)||f_k(o)| + \sup_{v \in T^*} |(\psi f_k)'(v)|\Lambda_n(|v|) < \varepsilon$$ for all $k$ sufficiently large.  Thus $\|M_\psi f_k\|_n \to 0$ as $k \to \infty$, proving the compactness of $M_\psi$.  

Conversely, suppose $M_\psi:\Lipm \to \Lipn$ is compact.  Let $(v_k)$ be a sequence in $T$ such that $1 \leq |v_k| \to \infty$.  For $k \in \N\cup\{0\}$ and $w \in T$, define $$g_k(w) = \frac{\bigchi_{v_{k}^-}(w)}{\Lambda_m(|v_k|)}.$$  Observe that $g_0(w) = \frac{\bigchi_{v_{0}^-}(w)}{\Lambda_m(|v_0|)}$, thus $g_0 \in \Lip^{0}$ with $\|g_0\|_0 \leq 1$.  For $k \geq 1$, $g_k \in \Lipm$ with $\|g_k\|_m = 1$, as shown in the proof of Theorem \ref{Theorem:boundedness} (with a minor modification).  Moreover, $(g_k)$ converges to 0 pointwise.  Since $M_\psi$ is compact, by Lemma \ref{Lemma:compactness}, it follows that $\|M_\psi g_k\|_n \to 0$ as $k \to \infty$.  Observing that $$|(\psi g_k)'(v)| = \frac{|\psi(v_k^-)|}{\Lambda_m(|v_k|)}$$ we obtain
$$\frac{|\psi(v_k^-)|\Lambda_n(|v_k|)}{\Lambda_m(|v_k|)} = |(\psi g_k)'(v_k)|\Lambda_n(|v_k|) \leq \|M_\psi g_k\|_n \to 0.$$  Thus $\nu_{\psi,m,n}(v) \to 0$ as $|v| \to \infty$.

Lastly, corresponding to the above sequence $(v_k)$, where without loss of generality we now assume $|v_k| > 3$, define
$$h_k(w) = \begin{cases}
0 &\text{if } |w| = 0 \text{ or } 1,\\
\frac{\ell_m(|w|)^2}{\ell_m(|v_k|)} &\text{if } 2 \leq |w| < |v_k|-1,\\
\ell_m(|v_k|) &\text{if } |w| \geq |v_k|-1.
\end{cases}$$  From \cite[Theorem 6.2]{AllenColonnaEasley:12} we have $h_k \in \Lipm$ and $\|h_k\|_m$ is bounded for all $m$.  Moreover, $(h_k)$ converges to 0 pointwise.  Therefore, by the compactness of $M_\psi$, we have
$$|\psi'(v_k)|\ell_m(|v_k|)\Lambda_n(|v_k|) = |(\psi h_k)'(v_k)|\Lambda_n(|v_k|) \leq \|M_\psi h_k\|_n \to 0,$$ as $k \to \infty$.  Thus $\mu_{\psi,m,n}(v) \to 0$ as $|v| \to \infty$, completing the proof. 
\end{proof}

\section{Isometries}\label{Section:Isomtries}
From \cite[Theorem 9.1]{ColonnaEasley:10} and \cite[Theorem 5.1]{AllenColonnaEasley:12}, the only isometric multiplication operators acting on $\Lipk$ are induced by unimodular constant functions, for all $k \geq 0$.  In this section, we show there are no isometries among the multiplication operators acting between distinct iterated logarithmic spaces.  

\begin{theorem} Let $m$ and $n$ be distinct non-negative integers.  Then $M_\psi:\Lipm \to \Lipn$ is not an isometry.
\end{theorem}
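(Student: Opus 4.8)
The plan is to assume, for contradiction, that $M_\psi:\Lipm\to\Lipn$ is an isometry and to squeeze out increasingly rigid pointwise information about $\psi$ from the identity $\|M_\psi f\|_n=\|f\|_m$, applied to three families of test functions. First I would test the constant function $1$: since $\|1\|_m=1$ and $M_\psi 1=\psi$, this forces $\|\psi\|_n=1$, and hence both $|\psi(o)|\le 1$ and $\sup_{v\in T^*}|\psi'(v)|\Lambda_n(|v|)\le 1$. Next I would reuse the point masses $f_v=\bigchi_v/\Lambda_m(|v|+1)$ from the proof of Theorem~\ref{Theorem:boundedness}, for which $\|f_v\|_m=1$ and $\|M_\psi f_v\|_n=|\psi(v)|\Lambda_n(|v|+1)/\Lambda_m(|v|+1)$. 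The isometry identity then pins the modulus of $\psi$ down exactly: $|\psi(v)|=\Lambda_m(|v|+1)/\Lambda_n(|v|+1)$ for every $v\in T^*$.

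The main tool is the characteristic function $\bigchi_{S_v}$ of a sector. A direct computation gives $\|\bigchi_{S_v}\|_m=\Lambda_m(|v|)$, while $M_\psi\bigchi_{S_v}=\psi\bigchi_{S_v}$ has discrete derivative $\psi(v)$ at $v$ and $\psi'(w)$ at each $w\in S_v\setminus\{v\}$ and vanishes at $o$, so that
$$\|M_\psi\bigchi_{S_v}\|_n=\max\Big\{|\psi(v)|\Lambda_n(|v|),\ \sup_{w\in S_v\setminus\{v\}}|\psi'(w)|\Lambda_n(|w|)\Big\}.$$
Substituting the modulus formula, the first entry equals $\Lambda_m(|v|)\,\rho_m(|v|)/\rho_n(|v|)$, where $\rho_k(x)=\Lambda_k(x+1)/\Lambda_k(x)=\prod_{j=0}^{k-1}\ell_j(x+1)/\ell_j(x)$. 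Since each factor exceeds $1$ (every $\ell_j$ is strictly increasing), $\rho_k(x)$ is strictly increasing in $k$, and this decides the comparison. If $m>n$ then $\rho_m>\rho_n$, so the first entry already exceeds $\|\bigchi_{S_v}\|_m$, contradicting the isometry. If $1\le m<n$ then $\rho_m<\rho_n$, so the first entry is strictly below $\Lambda_m(|v|)$ and the isometry forces the second entry to equal $\Lambda_m(|v|)$; but that supremum is at most $\sup_{w\in T^*}|\psi'(w)|\Lambda_n(|w|)\le 1$, whereas $\Lambda_m(|v|)\ge|v|\ge 2$ once $|v|\ge 2$. Either way we reach a contradiction.

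This leaves the single case $m=0<n$, which I expect to be the main obstacle: here $\Lambda_0\equiv 1$, the sector weight $\|\bigchi_{S_v}\|_0=1$ does not grow with depth, and the argument above only yields $\sup_{w\in S_v\setminus\{v\}}|\psi'(w)|\Lambda_n(|w|)=1$ for every $v\in T^*$. To finish I would introduce the truncated ramp $f_N(w)=\min\{|w|,N\}$, which lies in the unit ball of $\Lip=\Lip^{(0)}$ since $f_N(o)=0$ and $|f_N'(w)|\le 1$ (so $\|f_N\|_0=1$). For $|w|\ge N+1$ one has $f_N(w)=f_N(w^-)=N$, whence $(\psi f_N)'(w)=N\psi'(w)$; choosing any $v$ with $|v|\ge N+1$ and invoking the sector identity inside $S_v$ shows that the weighted supremum of $|\psi'|$ over $\{|w|\ge N+1\}$ is at least $1$, so $\|M_\psi f_N\|_n\ge N$. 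For $N\ge 2$ this contradicts $\|M_\psi f_N\|_n=\|f_N\|_0=1$, completing the proof. The only delicate points are the monotonicity of $\rho_k$ in $k$ and the bookkeeping of the discrete product rule on sectors; the $m=0$ case is precisely where the sector test degenerates and the ramp becomes necessary.
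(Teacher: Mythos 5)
Your proof is correct, but it takes a genuinely different and longer route than the paper's. The paper's proof has a slicker opening move: testing $M_\psi$ on the constant function $1$ \emph{and} on $g=\frac12\bigchi_o$ yields $\|\psi\|_n = 1 = |\psi(o)|$, and since the norm is the sum $|\psi(o)| + \sup_{v\in T^*}|\psi'(v)|\Lambda_n(|v|)$, equality at the root annihilates the derivative seminorm entirely, forcing $\psi'\equiv 0$, i.e.\ $\psi$ is a unimodular constant. After that, a single evaluation $\|\bigchi_w\|_m = \|M_\psi \bigchi_w\|_n = \|\bigchi_w\|_n$ at one vertex $w$ with $|w|\ge 2$ gives $\Lambda_m(|w|) = \Lambda_n(|w|)$, which is impossible because $k\mapsto\Lambda_k(|w|)$ is strictly increasing once $|w|\ge 2$; no case distinction on $m$ versus $n$ is ever needed. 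You instead extract the exact modulus $|\psi(v)| = \Lambda_m(|v|+1)/\Lambda_n(|v|+1)$ on $T^*$ from point masses, then play the sector functions $\bigchi_{S_v}$ (whose derivative is indeed supported at the single vertex $v$, so $\|\bigchi_{S_v}\|_m=\Lambda_m(|v|)$) against the strict monotonicity in $k$ of $\rho_k(x)=\Lambda_k(x+1)/\Lambda_k(x)$, and finally handle the degenerate case $m=0<n$ with the truncated ramp $\min\{|w|,N\}$; all three computations check out, including the product-rule bookkeeping. What the paper's argument buys is brevity and uniformity, via the structural trick of exploiting the additive form of the norm to collapse $\psi$ to a constant. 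What your argument buys in exchange for the case analysis is strictly more information: it exhibits the precise pointwise rigidity any isometry symbol would have to satisfy, and the sector and ramp test functions, together with the monotonicity of $\rho_k$, are reusable tools for norm comparisons between $\Lipm$ and $\Lipn$ that the paper's proof never exposes.
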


\begin{proof}
Assume $M_\psi$ is an isometry.  Taking as test functions the constant function 1 and $g(v) = \frac12\bigchi_o(v)$, we obtain
$$\|\psi\|_n = \|M_\psi 1\|_n = \|1\|_m = 1 = \|g\|_m = \|M_\psi g\|_n = \frac12\|\psi \bigchi_o\|_n = |\psi(o)|.$$  So $$\sup_{v \in T^*} |\psi'(v)|\Lambda_n(|v|) = \|\psi\|_n - |\psi(o)| = 0.$$  Since $\Lambda_n(|v|) \geq 1$ for all $v \in T^*$, it follows that $\psi'$ is identically zero.  Thus $\psi$ is a unimodular constant function.

Fix vertex $w$ in $T$ such that $|w| \geq 2$.   Then $$\Lambda_m(|w|) = \|\bigchi_w\|_m = \|M_\psi \bigchi_w\|_n = |\psi(w)|\Lambda_n(|w|) = \Lambda_n(|w|).$$  Observe that the sequence $\Lambda_k(|w|)$ is strinctly increasing in $k$ since $|w| \geq 2$.  So it can not be the case that $\Lambda_m(|w|) = \Lambda_n(|w|)$.  Therefore, $M_\psi$ is not an isometry.
\end{proof}

\bibliographystyle{amsplain}
\bibliography{references.bib}

\providecommand{\bysame}{\leavevmode\hbox to3em{\hrulefill}\thinspace}
\providecommand{\MR}{\relax\ifhmode\unskip\space\fi MR }
\providecommand{\MRhref}[2]{%
  \href{http://www.ams.org/mathscinet-getitem?mr=#1}{#2}
}
\providecommand{\href}[2]{#2}
\begin{thebibliography}{10}

\bibitem{AllenColonnaEasley:11}
Robert~F. Allen, Flavia Colonna, and Glenn~R. Easley, \emph{Multiplication
  operators between {L}ipschitz-type spaces on a tree}, Int. J. Math. Math.
  Sci. (2011), Art. ID 472495, 36. \MR{2799856}

\bibitem{AllenColonnaEasley:12}
\bysame, \emph{Multiplication operators on the iterated logarithmic {L}ipschitz
  spaces of a tree}, Mediterr. J. Math. \textbf{9} (2012), no.~4, 575--600.
  \MR{2991152}

\bibitem{AllenColonnaEasley:13}
\bysame, \emph{Multiplication operators on the weighted {L}ipschitz space of a
  tree}, J. Operator Theory \textbf{69} (2013), no.~1, 209--231. \MR{3029495}

\bibitem{AllenColonnaEasley:14}
\bysame, \emph{Composition operators on the {L}ipschitz space of a tree},
  Mediterr. J. Math. \textbf{11} (2014), no.~1, 97--108. \MR{3160615}

\bibitem{AllenCraig:2015}
Robert~F. Allen and Isaac~M. Craig, \emph{Multiplication operators on weighted
  {B}anach spaces of a tree}, Bull. Korean Math. Soc. \textbf{54} (2017),
  no.~3, 747--761. \MR{3659146}

\bibitem{AllenPons:2016}
Robert~F. Allen and Matthew~A. Pons, \emph{Composition operators on weighted
  {B}anach spaces of a tree}, Bull. Malays. Math. Sci. Soc. \textbf{41} (2018),
  no.~4, 1805--1818. \MR{3854493}

\bibitem{CohenColonna:94}
Joel~M. Cohen and Flavia Colonna, \emph{Embeddings of trees in the hyperbolic
  disk}, Complex Variables Theory Appl. \textbf{24} (1994), no.~3-4, 311--335.
  \MR{1270321}

\bibitem{Colonna:89}
Flavia Colonna, \emph{Bloch and normal functions and their relation}, Rend.
  Circ. Mat. Palermo (2) \textbf{38} (1989), no.~2, 161--180. \MR{1029707}

\bibitem{ColonnaEasley:12}
Flavia Colonna and Glenn Easley, \emph{Multiplication operators between the
  {L}ipschitz space and the space of bounded functions on a tree}, Mediterr. J.
  Math. \textbf{9} (2012), no.~3, 423--438. \MR{2954500}

\bibitem{ColonnaEasley:10}
Flavia Colonna and Glenn~R. Easley, \emph{Multiplication operators on the
  {L}ipschitz space of a tree}, Integral Equations Operator Theory \textbf{68}
  (2010), no.~3, 391--411. \MR{2735443}

\bibitem{HosokawaDieu:09}
Takuya Hosokawa and Nguyen~Quang Dieu, \emph{Weighted composition operators on
  the logarithmic {B}loch spaces with iterated weights}, Nihonkai Math. J.
  \textbf{20} (2009), no.~1, 57--72. \MR{2572800}

\bibitem{MuthukumarPonnusamyII}
Perumal Muthukumar and Saminathan Ponnusamy, \emph{Composition operators on the
  discrete analogue of generalized {H}ardy space on homogenous trees}, Bull.
  Malays. Math. Sci. Soc. \textbf{40} (2017), no.~4, 1801--1815. \MR{3712587}

\bibitem{MuthukumarPonnusamyI}
\bysame, \emph{Discrete analogue of generalized {H}ardy spaces and
  multiplication operators on homogenous trees}, Anal. Math. Phys. \textbf{7}
  (2017), no.~3, 267--283. \MR{3683009}

\bibitem{Tjani:96}
Maria Tjani, \emph{Compact composition operators on some {M}oebius invariant
  {B}anach spaces}, ProQuest LLC, Ann Arbor, MI, 1996, Thesis (Ph.D.)--Michigan
  State University. \MR{2695395}

\bibitem{Yoneda:02}
Rikio Yoneda, \emph{The composition operators on weighted {B}loch space}, Arch.
  Math. (Basel) \textbf{78} (2002), no.~4, 310--317. \MR{1895504}

\end{thebibliography}
\end{document}